\newtheorem*{Lem}{Lemma}
\newtheorem*{Thm}{Theorem}
\numberwithin{equation}{subsection}
\theoremstyle{remark}
\newtheorem*{Rmk}{Remark}
\newcommand{\vform}{\langle\mspace{7mu},\mspace{7mu}\rangle}
\newcommand{\hform}{ (\mspace{7mu},\mspace{6mu})}
\newcommand{\tp}{{}^{\top}}
\newcommand{\pic}{\pi^{\vee}}
\newcommand{\ui}{{}^{\iota}}
\newcommand{\ta}{\theta}
\newcommand{\la}{\lambda}
\newcommand{\ve}{\varepsilon}
\newcommand{\fac}{\Bar{F}}
\newcommand{\Ac}{\Bar{A}}
\newcommand{\ut}{{}^{\theta}}
\newcommand{\utt}{{}^{\Bar{\theta}}}
\newcommand{\ms}{\mathcal{S}}
\newcommand{\msb}{\Bar{\mathcal{S}}}
\newcommand{\nr}{{\rm N}_A}
\newcommand{\nrb}{{\rm N}_{\Bar{A}}}
\newcommand{\ia}{\iota}
\newcommand{\uia}{{}^{\iota}}
\newcommand{\vad}{V_a^{\vee}}
\newcommand{\mcp}{\mathcal{P}}
\newcommand{\lag}{\mathfrak{g}}
\newcommand{\ml}{\mathcal{L}}
\newcommand{\od}{\mathfrak{o}_D}
\newcommand{\fo}{\mathfrak{o}_F}
\begin{document}

\thanks{2010 {\em Mathematics Subject Classification.} 16W10, 22E50, 15A24.}
\thanks{Vinroot was supported in part by a grant from the Simons Foundation, Award \#280496}

\title{A note on dual modules and the transpose}
\makeatletter
\def\author@andify{%
\nxandlist {\unskip ,\penalty-1 \space\ignorespaces}%
   {\unskip {}  \@@and~}%
   {\unskip \penalty-2 \space  \@@and~}%
   }
 \makeatother

\author[T. Madsen]{Thomas Madsen}
\address{Dept. of Mathematics and Statistics, Youngstown State University, Youngstown, OH 44555.} 
\email{tlmadsen@ysu.edu}
\author[A. Roche]{Alan Roche}
\address{Dept. of Mathematics, University of Oklahoma, Norman, OK 73019-3103.}
\email{aroche@math.ou.edu}
\author[C. R. Vinroot]{C.~Ryan Vinroot}
\address{Dept. of Mathematics, College of William and Mary, P.O.~Box 8795,  Williamsburg, VA 23187-8795.}
\email{vinroot@math.wm.edu}


\begin{abstract}
It is a classical result in matrix algebra that any square matrix over a field can be conjugated to its transpose by a symmetric matrix. For $F$ a non-Archimedean local field, Tupan used this to give an elementary proof that  transpose inverse takes each irreducible smooth 
repesentation of ${\rm GL}_n(F)$ to its dual.  
We re-prove the matrix result and related observations using module-theoretic arguments. In addition, we write down a generalization that applies to central simple algebras with an involution of the first kind. We use this generalization to extend Tupan's method of argument to 
${\rm GL}_n(D)$ for $D$ a quaternion division algebra over $F$. 
\end{abstract}

\maketitle 

\section*{Introduction}
Let $F$ be a field and let $a$ be a square matrix over $F$.
Writing $\top$ for transpose, it is well known that there is an invertible matrix $g$ over $F$ 
such that $g a g^{-1} = \tp a$ and $\tp g = g$
(see, for example, \cite[2.6]{Kap} or \cite{TTZ}). 

Our first object is to extend this classical matrix statement. 
We do so by replacing the pair $({\rm M}_n(F), \top)$ by 
$(A, \ta)$ where $A$ is a central simple algebra over $F$ and $\ta$ is an involution on $A$ of the first kind. 
By definition, the map $\ta$ is $F$-linear, reverses multiplication and satisfies  
 $\ut (\ut a) = a$ for all $a \in A$. 
For $\fac$ an algebraic closure of $F$, we have $\Ac = A \otimes_F \fac \cong {\rm M}_n(\fac)$
for $n^2 = \dim_F A$. 
The extended map $\Bar{\ta} = \theta \otimes 1_{\fac}$ is then
an involution of the first kind on $\Ac$. For any $b \in {\rm GL}_n(\fac)$, we write ${\rm Int} \,b$ for the 
inner automorphism of ${\rm M}_n(\fac)$ given by conjugation by $b$. 
By a standard argument, any isomorphism 
$\Ac \cong {\rm M}_n(\fac)$ takes $\Bar{\ta}$ to a composition ${\rm Int}\, b \circ \top$
where $\tp b = \ve \, b$ for $\ve = \pm 1$.  The sign $\ve$ is independent of  the choice of $b$ and the choice of isomorphism $\Ac \cong {\rm M}_n(\fac)$. Accordingly, we write $\ve  = \ve(\ta)$. 
Our extension of the classical matrix result is as follows: 
\[
\text{for any $a \in A$, there is a $g \in A^\times$ such that $gag^{-1} = \ut a$ and 
$\ut g = \ve(\ta) \,g$.}    \tag{$\star$}
\]

Suppose now that $F$ is a non-Archimedean local field. Tupan used the classical matrix result 
and some $p$-adic topology to give an elementary proof that transpose inverse takes each 
irreducible smooth representation of ${\rm GL}_n(F)$ to its dual.
This was first established by Gelfand and Kazhdan by a geometric method \cite{GK}. Raghuram extended Gelfand-Kazhdan's method  to the group ${\rm GL}_n(D)$ where $D$ is a quaternion 
division algebra over $F$ \cite{Ragh}.
Using $(\star)$, it is a simple matter to extend Tupan's arguments to ${\rm GL}_n(D)$. We record the details in \S \ref{application} below. 

In the final section of the paper, we re-prove the classical matrix statement and related observations from \cite{TTZ}. In place of matrix computations, we use some standard facts about finitely generated torsion modules over PIDs. 
While we certainly do not match the brevity or efficiency of the arguments in \cite{TTZ}, there may be some merit in recording our conceptual approach.

\section{Conjugacy and Involutions}
Let $F$ be a field and let $A$ be a central simple $F$-algebra. Thus the  $F$-algebra $A$ 
admits no proper nonzero two-sided ideals and has center $F$. 
For us also, $A$ always has finite dimension as a vector space over $F$. 
Let $\ta$ be an involution on $A$. That is, 
\begin{enumerate}[a)]
\item
$\ta:A \to A$ is $F$-linear,
\item
$\ut(ab) = \ut b \, \ut a$ for all $a, b \in A$, 
\item
$\theta \circ \theta = 1_A$, the identity map on $A$. 
\end{enumerate}
In the literature, such maps are called {\it involutions of the first kind} in contrast to {\it involutions of the second kind} which satisfy only b) and c). Since we make no use here of involutions of the second kind, we use the term `involution' from now on in place of the more cumbersome `involution of the first kind.'

Attached to $\theta$ is a sign $\ve(\ta) = \pm 1 \in F$ as discussed in the next two subsections.  
In the final subsection, we prove the conjugacy statement $(\star)$ from the introduction. 

\begin{Rmk} 
There is a natural dichotomy -- orthogonal versus symplectic -- for involutions $\ta$ as above 
(see \cite[2.1]{KMRT} and the surrounding discussion).  
For ${\rm char}\,F \neq 2$, we have $\ve(\ta) = 1$ (resp. $-1$) if  $\ta$ is orthogonal
(resp.~symplectic). 
In the case ${\rm char}\,F = 2$, the dichotomy is irrelevant to our purposes.  
\end{Rmk}

\subsection{The Split Case} \label{split-case}
We look first at the case of a matrix algebra $A = {\rm M}_n(F)$. 
As above, we write $\tp a$ for the transpose of any  $a \in A$. 
For any involution $\ta$ on $A$,  
the composition $\theta \circ \top$ is an $F$-algebra automorphism of $A$. As such 
automorphisms are  inner, there is a $b \in A^\times$ which is unique up to multiplication by an element of $F^\times$ such that 
\[
       \ut (\tp a) = b a b^{-1}, \hskip10pt \forall \, a \in A.
       \]
Equivalently,        
\begin{equation}   \label{form-of-theta}
  \ut a = b (\tp a) b^{-1}, \hskip10pt \forall \, a \in A.
\end{equation}
Using $\theta^2 = 1_A$, it follows that 
\[
  a  =  b (\tp b^{-1}) \, a \, (\tp b) b^{-1},  \hskip10pt \forall \, a \in A, 
  \]
 and  so $(\tp b) b^{-1}$ is a scalar matrix. That is, 
  \[
  \tp b = \ve \,b, \hskip10pt \text{for some $\ve \in F^\times$.}
  \]
Taking the transpose of each side, we obtain $b = \ve^2 \,b$. Thus $\ve = \pm 1$, so 
the matrix $b$ is symmetric or skew-symmetric.  We put $\ve(\ta) = \ve$.

The sign $\ve(\ta)$ can be expressed in terms of the eigenspaces of $\theta$ on $A$. 
We write $A^{\ta, +}$ and $A^{\ta, -}$ for the $+1$ and $-1$ eigenspaces of $\theta$ (resp.), so 
that $A =  A^{\ta, +} \oplus A^{\ta, -}$ if  ${\rm char}\,F \neq 2$. 
Using (\ref{form-of-theta}), one checks readily that 
\[
     \ut a = \pm a \hskip5pt \Longleftrightarrow \hskip5pt \tp(ab) = \pm \ve \, ab.
     \]
For ${\rm char}\,F \neq 2$, it follows that $\ve = \ve(\ta)$ can be characterized by 
\begin{equation} \label{e-space-formula}
\dim_F A^{\theta, +}  = \begin{cases} \dfrac{n(n+1)}{2} \hskip15pt  &\ve = 1 \\
                                        \dfrac{n(n-1)}{2}            &\ve = -1.
\end{cases}
\end{equation}
The formula still holds when ${\rm char}\,F = 2$ in the sense that the $F$-subspace of $\theta$-fixed vectors also has dimension $ \dfrac{n(n+1)}{2}$ in this case.

\subsection{}  \label{general-setting}
We return to the general setting. Thus 
$A$ is a central simple $F$-algebra and $\ta$ is an involution on $A$. 

As in the introduction, we write $\fac$ for an algebraic closure of $F$ and set $\Ac = A \otimes_F \fac$. 
Then $\Ac$ is a central simple $\fac$-algebra and hence $\Ac \cong {\rm M}_n(\fac)$ where
$n^2 = \dim_F A$. 
The map $\ta$ extends to an involution $\Bar{\ta}$ on $\Ac$ and so there is a sign 
$ \ve(\Bar{\ta})$ as in \S\ref{split-case}. More  pedantically, we set      
      \[
      \ve(\Bar{\ta}) = \ve(\alpha \Bar{\ta} \alpha^{-1}) 
\]
for any $\fac$-algebra isomorphism $\alpha: \Ac \to  {\rm M}_n(\fac)$. By definition, 
\[
      \ve(\ta) = \ve(\Bar{\ta}). 
      \]
Again, we write $A^{\ta, +}$ and $A^{\ta, -}$ for  the $+1$ and $-1$ eigenspaces (resp.) of 
$\theta:A \to A$ and use a parallel notation for $\Ac$. The maps  
\[
a \otimes \la \mapsto \la a :A^{\ta, \pm}  \otimes_F \fac  \overset{\simeq}{\longrightarrow}
   \Ac^{\Bar{\ta}, \pm}
\]         
are then isomorphisms of $\fac$-vector spaces. 
In particular, $\dim_F A^{\ta, +} = \dim_{\fac}  \Ac^{\Bar{\ta}, +}$. 
It follows that formula (\ref{e-space-formula})
also holds in this setting and characterizes the sign $\ve(\ta)$. 
(Again we just use the first line ($\ve = 1$) when ${\rm char}\,F = 2$.) 

\subsection{}  \label{conjugacy} 
We can now state and prove our conjugacy result. 

\begin{Thm}
Let $F$ be a field and let $A$ be a central simple $F$-algebra with involution $\ta$. 
We set $\ve = \ve(\ta)$. For any $a \in A$, there is a $g \in A^\times$ such that 
\[
          g a g^{-1} = \ut a \hskip5pt {\rm and} \hskip5pt \ut g = \ve \, g.
          \]
\end{Thm}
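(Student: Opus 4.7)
The plan is to reduce to the split matrix case via scalar extension, exploiting that the conditions on the sought-after $g$ are $F$-linear. Consider the $F$-subspace
\[
V_a = \{g \in A : ga = (\ut a)\,g \text{ and } \ut g = \ve\,g\} \subseteq A;
\]
the theorem is the assertion that $V_a \cap A^\times$ is non-empty.

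I would first settle the split case $A = {\rm M}_n(F)$. Writing $\ta = {\rm Int}\,b \circ \top$ with $\tp b = \ve b$, as in Section~\ref{split-case}, the substitution $g = bh$ recasts the conditions defining $V_a$ as $hah^{-1} = \tp a$ together with $\tp h = h$. The classical matrix result quoted in the introduction then supplies an invertible symmetric $h \in A$, so $g = bh$ is an invertible element of $V_a$. This argument is valid over any field.

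For general central simple $A$, I would base change to $\fac$. Since the defining conditions of $V_a$ are $F$-linear and $\utt$ agrees with $\ut$ on $A$, the scalar extension $V_a \otimes_F \fac$ sits inside $\Ac$ as the analogous subspace attached to $(a \otimes 1, \utt)$, so the split case applied to $\Ac \cong {\rm M}_n(\fac)$ produces an invertible element there. To descend to $F$, fix an $F$-basis of $V_a$ and view the reduced norm restricted to $V_a$ as a polynomial in the basis coordinates with coefficients in $F$. The preceding step shows it does not vanish identically over $\fac$, hence it is a nonzero polynomial over $F$. For $F$ infinite, it takes a nonzero value at some $F$-point, giving an invertible $g \in V_a$; for $F$ finite, Wedderburn's little theorem forces $A = {\rm M}_n(F)$, and the split case applies directly.

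The main obstacle I expect is this final descent step: everything rests on $V_a$ being an $F$-subspace and the reduced norm having $F$-coefficients, so that the standard non-vanishing-polynomial argument handles infinite $F$ while the finite case is absorbed by Wedderburn. The remainder of the argument is essentially a translation of the classical matrix fact through the substitution $g = bh$.
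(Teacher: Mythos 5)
Your proposal is correct and follows essentially the same route as the paper's proof: settle the split case by the substitution $g = bh$ reducing to Kaplansky's classical fact, then define the $F$-subspace of candidate solutions, base change to $\fac$ to show the reduced norm restricted to this subspace is not identically zero, and conclude over infinite $F$ by a non-vanishing polynomial argument (the paper disposes of finite $F$ by the same Wedderburn observation, simply noting that a non-split $A$ forces $F$ infinite).
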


\begin{proof}
We look first at the split case $A = {\rm M}_n(F)$. By \cite[page 76]{Kap}, 
there is an $h \in A^\times$ such that
\[
  h a h^{-1} = \tp a\hskip5pt {\rm and} \hskip5pt \tp h = h.
\]
With $b$ as in (\ref{form-of-theta}), we have 
\begin{align*} 
   (bh) a (bh)^{-1} &=   b (h a h^{-1}) b^{-1} \\ 
                            &= b (\tp a) b^{-1} \\
                                                  &= \ut a.
                                                  \end{align*} 
Moreover, 
\begin{align*} 
      \ut (bh) &= b \, \tp h \, \tp b \, b^{-1} \\
                     &= \ve  \, bh \hskip15pt \text{(using   $\tp h  = h$ and $\tp b = \ve \, b$)}.
                     \end{align*} 
This establishes the result in the split case. 

Suppose now that $A$ is non-split. In particular, the field $F$ must be infinite. 
We use the notation introduced in \S \ref{general-setting}. Thus 
$\Ac = A \otimes_F \fac \cong {\rm M}_n(\fac)$  for $n^2 = \dim_F A$. 
We fix $a \in A$. Since the result holds in the split case, there is a 
$g \in \Ac^\times$ such that 
\begin{equation}  \label{solution}
   g a g^{-1} = \ut a\hskip5pt {\rm and} \hskip5pt \utt g = \ve \,g.
   \end{equation} 

Write $\ms$ for the set of $x$  in $A$ such that 
\[
   x a   = \ut x a, \,\, \ut x = \ve \, x.
   \]
Then $\ms$ is an $F$-subspace of $A$ and 
 $\msb = \ms \otimes_F \fac$ consists of all $x \in \Ac$ such that 
\[
   x a   = \utt x a, \,\, \utt x = \ve \, x.
   \]
Note that $\msb$ contains the invertible element $g$ of (\ref{solution}). 
To complete the proof, we show that $\ms$ contains an invertible element. 
We do so by borrowing an argument from Raghuram (see the proof of  \cite[Lemma 3.1]{Ragh}) and Tupan (see the proof of \cite[Lemma 2]{Tupan}). 

We write $\nr: A \to F$ and $\nrb: \Ac \to \fac$ 
for the reduced norm maps on $A$ and $\Ac$ (resp.).
We have 
\begin{equation} \label{norms}
\nrb(a \otimes 1) = \nr (a), \hskip10pt a \in A. 
\end{equation}
Let $s_1, \ldots, s_m$ be a basis of  $\ms$ and consider the polynomial 
$f \in F[X_1, \ldots, X_m]$ such that 
\[
f(\la_1, \ldots, \la_m) = \nr(\lambda_1 s_1 + \cdots + \lambda_m s_m)
\]
for $\la_1, \ldots, \la_m \in F$.
Given $\Bar{\la}_1, \ldots, \Bar{\la}_m \in \fac$, it follows from (\ref{norms}) that  
\[
  \nrb( s_1 \otimes \Bar{\la}_1 +  \cdots  +  s_m \otimes \Bar{\la}_m ) =  
  f( \Bar{\la}_1, \ldots,  \Bar{\la}_m).
  \]
By (\ref{solution}), $\msb$ contains the invertible element $g$, so that 
$\nrb(g) \neq 0$. In particular, $f$ is not the zero polynomial. 
As $F$ is infinite, it follows that there exist
$\la_1, \ldots, \la_m \in F$ such that $f(\la_1, \ldots, \la_m) \neq 0$. That is, 
\[
\nr(g') \neq 0 \hskip5pt \text{for $g' = \la_1 s_1 + \cdots + \la_m s_m \in \ms$}.
\]
Thus $g'$ is invertible and we have completed the proof.
\end{proof}

\section{An Application} \label{application} 
Let $F$ be a non-Archimedean local field and let $D$ be a quaternion division algebra over $F$. 
We write $\gamma$ for the canonical conjugation on $D$. 
 It is the unique symplectic involution on $D$ 
(\cite[2.21]{KMRT}). In particular, $\ve(\gamma) = -1$.
Given $a = (a_{ij}) \in  {\rm M}_n(D)$, we set ${}^{\gamma}a = ({}^{\gamma}a_{ij})$ and 
$\ut a = \tp {}^{\gamma}a$ (so that $\ut a$ has $ij$ entry ${}^{\gamma}a_{ji}$.) 
The resulting map $\ta$ defines an involution on ${\rm M}_n(D)$. We have $\ve(\ta) = -1$
(by a direct calculation or by \cite[2.20]{KMRT}). 
 Finally, we define an involutary automorphism $\iota$ of ${\rm GL}_n(D)$ by 
 $\uia g = \ut g^{-1}, \, g \in {\rm GL}_n(D)$. 

For $\pi$ an irreducible smooth (complex) representation of $G$, we write $\pic$ for the smooth dual or contragredient of $\pi$. It is well known that $\pi \circ \iota \cong \pic$ for any such $\pi$. 
In the terminology of \cite{RV-CMB, RV-JLT}, 
$\ia$ is a {\it dualizing involution}. This was proved by Mui\'{c} and Savin  
 in the case ${\rm char}\,F = 0$ \cite{MS} using character theory and by Raghuram  in all characteristics \cite{Ragh}. Raghuram's proof is an  adaptation of   a
geometric method used by  Gelfand and Kazhdan to show that $a \mapsto \tp a^{-1}$ is a dualizing involution on ${\rm GL}_n(F)$ \cite{GK}.   As noted in the introduction, 
Tupan found a completely elementary proof of Gelfand-Kazhdan's result using 
a) the classical observation that a square matrix over a field is conjugate to its transpose via a symmetric matrix and b) some $p$-adic topology  \cite{Tupan}. 
Our object in this section is to show that Tupan's method carries over to the group ${\rm GL}_n(D)$ using Theorem \ref{conjugacy} in place of a).

\subsection{}  \label{hypotheses}
It is convenient to use the axiomatic version of Tupan's method from  \cite{RV-JLT}. Thus let $G$ be the group of $F$-points of a reductive algebraic group over $F$ and write $\lag$ for the Lie algebra of $G$. Let $\vartheta:G \to G$ be an involutary anti-isomorphism on $G$ (induced by a corresponding map on the underlying algebraic group). We also write $\vartheta$ for the induced map on $\lag$.  As usual, for $x \in G$, we write ${\rm Int} (x)$ 
for the automorphism of $G$ given by conjugation by $x$ and 
${\rm Ad} (x)$ for the induced map on $\lag$. 

Let $\fo$ denote the valuation ring of $F$ and fix a uniformizer $\varpi$ in $F$.  Consider the following
hypotheses. 

\begin{enumerate}[{\bf (1)}]
\item
There is an  $\fo$-lattice $\ml \subset \lag$ and a map $c: \lag_1 \to G$ for a certain subset $\lag_1$ of $\lag$ such that the following hold.
\begin{enumerate}[{\rm (a)}]
\vskip3pt 
\item
${}^{\vartheta} \lag_1 = \lag_1$ and $\vartheta  \circ c = c \circ \vartheta$.
\vskip3pt 
\item
${\rm Ad} (x)  \lag_1 = \lag_1$  and 
${\rm Int} (x)  c(X)  = c ({\rm Ad} (x) X)$ for all $x \in G$ and $X \in \lag_1$.
\vskip3pt 
\item
${}^{\vartheta}  \ml = \ml$ and $\varpi \ml \subset \lag_1$.
\vskip3pt 
\item
For each $k \geq 1$, the restriction $c \mid \varpi^k \ml$ is a homeomorphism onto a compact open subgroup of $G$. In particular,  
the family $\{c (\varpi^k \ml) \}_{k \geq 1}$ consists of compact open subgroups and forms a neighborhood basis of the identity in $G$. 
\end{enumerate}

\vskip5pt

\item
For each $a \in G$, there is a $g \in G$ with ${}^{\vartheta}g = g$ such that 
$gag^{-1} = {}^{\vartheta}a$.
\end{enumerate}

\vskip5pt
Let   $\uia g = {}^{\vartheta}  g^{-1}, \, g \in G$. Subject to these hypotheses, \cite[Theorem 2.2]{RV-JLT} shows that the resulting map $\ia:G \to G$ is a dualizing involution.

\subsection{}
We apply the framework of \S \ref{hypotheses} to $G = {\rm GL}_n(D)$. 
We have $\lag = {\rm M}_n(D)$ and ${\rm Ad}(x) X = x X x^{-1}$ for $x \in G$ and $X \in \lag$. 
We write ${\rm Nrd}:{\rm M}_n(D) \to F$ for the reduced norm map and
$\od$ for the the unique maximal $\fo$-order in $D$. 

Suppose first that ${\rm char}\,F = 2$. We take  
$\lag_1 = \{ X \in {\rm M}_n(D) : {\rm Nrd}(1+X) \neq 0 \}$ and define $c:\lag_1 \to G$ by $c(X) = 1+X$. 
We set $\ml =  {\rm M}_n(\od)$. With $\vartheta = \ta$, it is then immediate that (a)-(d) of (1) hold. 
Hypothesis (2) holds by Theorem \ref{conjugacy}.  Thus $\ui g = \ut g^{-1}, \,g \in G$, 
defines a dualizing involution. 
 
Suppose now that ${\rm char}\,F \neq 2$. Choose any $y \in D^\times$ with ${}^{\gamma} y = -y$. 
For simplicity, we also write $y$ for the matrix $y I_n$ in $G$. For $a \in {\rm M}_n(D)$, we set
${}^{\vartheta} a = y \, \ut a \, y^{-1}$.  Since ${}^{\gamma} y = -y$, the resulting 
map $\vartheta$ is an involution.  By a direct calculation, 
\[
      {}^{\vartheta} a =   a \hskip5pt \Longleftrightarrow  \hskip5pt  \ut (ay) =  -  ay.
      \]
It follows that $\ve(\vartheta) = 1$.  We set 
\[
 \ml =   {\rm M}_n(\od)  \cap y \, {\rm M}_n(\od) \, y^{-1}.
\]
Note that ${}^{\gamma}(y^2) = y^2$, so that $y^2 \in F$. Thus 
\begin{equation}  \label{y-stable}
   y \,\ml \,y^{-1} =  y\, {\rm M}_n(\od) \,y^{-1}  \cap y^2 \, {\rm M}_n(\od) \,y^{-2} =  \ml.
   \end{equation}
Similarly, 
\begin{align}  \label{theta-stable} 
\ut \ml     &=  \ut {\rm M}_n(\od)  \cap \ut y^{-1} \, \ut {\rm M}_n(\od) \, \ut y   \\
&=  {\rm M}_n(\od)  \cap  y^{-1} \, {\rm M}_n(\od) \, y  \hskip10pt \text{(using $\ut y = -y$)} \notag \\
&= {\rm M}_n(\od)  \cap  y \,{\rm M}_n(\od) \, y^{-1}    \hskip10pt \text{(as $y^2 \in F$)} \notag \\
                &=  \ml.    \notag
                \end{align} 
By (\ref{y-stable}) and (\ref{theta-stable}), we have ${}^{\vartheta} \ml= \ml$. 
Again we take  $\lag_1 = \{ X \in {\rm M}_n(D) : {\rm Nrd}(1+X) \neq 0 \}$ and 
define $c:\lag_1 \to G$ by $c(X) = 1+X$. 
Then (a)-(d) of (1) hold. Hypothesis~(2) holds once more by Theorem \ref{conjugacy}. 
Hence $g \mapsto {}^{\vartheta} g^{-1}$ defines a dualizing involution of $G$. 
As $\vartheta$ and $\theta$ differ by an inner automorphism, it follows that
$\uia g = \ut  g^{-1}, \, g \in G,$  is also a dualizing involution.

\section{Revisiting the matrix algebra case} 
Let $n$ be a positive integer and let $a \in{\rm  M}_n(F)$. We set $V = F^n$, viewed as a set of column vectors, and write $V_a$ for the $F[X]$-module structure on $V$ given by $f(X)v = f(a) v$ for $f(X) \in F[X]$ and $v \in V$. 
We wish to prove the following.

\begin{Thm}
Given $a\in {\rm M}_n(F)$, there is a $g \in {\rm GL}_n(F)$ such that 
\[
(1) \hskip3pt   g a g^{-1} = \tp a \hskip5pt \text{and} \hskip5pt  (2) \hskip3pt \tp g = g. 
\]
Moreover, if $V_a$ is a cyclic module then any $g  \in {\rm GL}_n(F)$ that satisfies (1) also satisfies (2).  
If $V_a$ is non-cyclic then there is a  $g  \in {\rm GL}_n(F)$ that satisfies (1) but not (2).  
In other words, every $g \in {\rm GL}_n(F)$ that satisfies (1) also satisfies (2) 
if and only if the minimal and characteristic polynomials of $a$ coincide. 
\end{Thm}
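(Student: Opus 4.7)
The plan is to translate everything into the language of finitely generated torsion $F[X]$-modules. Write $V = F^n$ as column vectors and endow $V$ with the $F[X]$-module structure $V_a$ in which $X$ acts as $a$. A matrix $g$ corresponds to the bilinear form $B_g(v,w) = \tp w\, g\, v$ on $V$, and a direct check shows that (1) is equivalent to $B_g$ being \emph{$a$-invariant}, i.e.\ $B_g(av,w) = B_g(v,aw)$; (2) is equivalent to $B_g$ being symmetric; and $g \in \mathrm{GL}_n(F)$ corresponds to $B_g$ nondegenerate. The task then becomes one about nondegenerate $a$-invariant bilinear forms on $V_a$.

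For the existence assertion I would build a symmetric nondegenerate $a$-invariant form from the elementary divisor decomposition $V_a = \bigoplus W_i$ with $W_i = F[X]/(p_i^{e_i})$. On each cyclic piece $W = F[X]/(p)$ of degree $d$, the recipe $B(u,v) = \phi(uv)$ --- where $\phi : W \to F$ reads off the coefficient of $X^{d-1}$ --- is manifestly symmetric (since $uv = vu$) and $X$-invariant, and its Gram matrix in the basis $1, X, \dots, X^{d-1}$ is anti-triangular with ones on the anti-diagonal, hence nondegenerate. Taking the orthogonal direct sum produces the required symmetric $g$, and, as a byproduct, a block-diagonal symmetric $g_0$ adapted to the decomposition, which is what I will use in the rest of the argument.

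For the cyclic direction, I would observe that any solution $g$ of (1) has the form $g = g_0 T$ with $T$ commuting with $a$ (since $g_0^{-1} g$ commutes with $a$); when $V_a$ is cyclic, the commutant of $a$ equals $F[a]$, so $T = p(a)$ for some polynomial $p$, and the one-line computation $\tp g = p(\tp a)\, g_0 = p(g_0 a g_0^{-1})\, g_0 = g_0\, p(a) = g$ yields (2). For the non-cyclic direction, I would exhibit an explicit unipotent perturbation. Using the invariant factor form $V_a \cong \bigoplus_{i=1}^r F[X]/(d_i)$ with $r \geq 2$ and $d_1 \mid d_2 \mid \cdots$, multiplication by $d_2 / d_1$ is a nonzero $F[X]$-linear map $M : F[X]/(d_1) \to F[X]/(d_2)$. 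Let $N$ be the block endomorphism of $V_a$ with $M$ as its $(2,1)$-block and zero elsewhere; then $N^2 = 0$, so $T = I + N$ is unipotent (hence invertible). A block computation with $g_0 = \mathrm{diag}(h_1, h_2, \dots)$ shows $g_0 T$ has $(2,1)$-block $h_2 M \neq 0$ and $(1,2)$-block $0$, while $\tp(g_0 T)$ has the opposite configuration with $(1,2)$-block $\tp M\, h_2 \neq 0$, so $g := g_0 T$ satisfies (1) but not (2). The closing ``if and only if'' is then immediate: in the invariant factor decomposition the minimal polynomial is $d_r$ and the characteristic polynomial is $\prod d_i$, and these agree precisely when $r = 1$.

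The step I expect to be the main obstacle is the non-cyclic direction, because one must produce a solution of (1) that is simultaneously invertible \emph{and} non-symmetric, which rules out na\"ive density arguments when $F$ is finite. The unipotent choice $T = I + N$ is what sidesteps this: $I + N$ is invertible for any nilpotent $N$ over any field, so the construction works uniformly regardless of the size of $F$.
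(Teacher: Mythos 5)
Your proof is correct, and it takes a genuinely different route from the paper's, although both live in the same module-theoretic world. The paper works with the dual module $V_a^\vee = {\rm Hom}_F(V_a,F)$: it shows abstractly that $V_a \cong V_a^\vee$, identifies $V_{\tp a}$ with $V_a^\vee$ via the dot product, and reduces symmetry of $g$ to the condition $\eta^\vee = \eta$ on the resulting isomorphism $\eta : V_a \to V_a^\vee$; the cyclic case then follows from a short lemma (proved by evaluating the pairing on $f(X)v_1,\,g(X)v_1$ for a generator $v_1$ and using commutativity of $F[X]$), and the non-cyclic case from perturbing $\eta$ by an off-diagonal block map $\eta_{12}$ into $V_2^\vee$. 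You instead phrase everything directly in terms of nondegenerate $a$-invariant bilinear forms, construct a symmetric solution $g_0$ explicitly via the anti-triangular ``read off the top coefficient'' Gram matrix on each cyclic block, handle the cyclic case by the standard fact that the commutant of a cyclic operator is $F[a]$ (so $T = g_0^{-1}g = p(a)$, and symmetry drops out of a one-line transpose calculation), and handle the non-cyclic case by the unipotent perturbation $T = I+N$ built from multiplication by $d_2/d_1$ in the invariant factor decomposition. The two non-cyclic constructions are the same idea in multiplicative versus additive form, and via different decompositions (invariant factors versus primary); your $g_0(I+N)$ is essentially the paper's $\eta + (\text{off-diagonal})$. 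What your version buys is greater concreteness: an explicit symmetric $g_0$ rather than an abstract isomorphism, and the commutant-of-a-cyclic-operator fact in place of the paper's $\eta^\vee = \eta$ lemma, which many readers will find more familiar. What the paper's version buys is uniformity: a single dual-module formalism carries all three parts, with no need to single out a favored basis or Gram matrix. Your remark that the unipotent choice of $T$ avoids any density argument over small fields is apt, though the paper's construction also works over arbitrary $F$ without appealing to density.
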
   
The result is not new -- see  \cite{TTZ}. Our goal is to provide a conceptual proof that 
relies principally on standard facts about finitely generated torsion modules over PIDs
and makes minimal use of special matrix calculations. 

\begin{proof}
The argument is spread over the next several subsections. 

\subsection{Reduction to Cyclic Case.} \label{reduce-to-cyclic}
There is an $h \in {\rm GL}_n(F)$ such that 
\[
h a h^{-1} = \begin{bmatrix} 
                           a_1    & 0     &  \cdots     &   \cdots  &   0  \\
                            0       & a_2  &0    &       &  \vdots \\
                            \vdots    &  0   & \ddots    &\ddots   &\vdots  \\
                            \vdots &  \vdots        &  \ddots     &        \ddots      & 0 \\
                             0 &   0       &  \cdots     &          0    & a_r 
                             \end{bmatrix} 
\]
with $a_i \in {\rm M}_{n_i}(F)$ and each $(F^{n_i})_{a_i}$ cyclic ($i=1, \ldots, r$). 
Suppose $b_i \in {\rm GL}_{n_i}(F)$ satisfies 
\[
b_i a_i b_i^{-1} = \tp a_i \hskip5pt \text{and} \hskip5pt  \tp b_i = b_i \hskip15pt (i=1, \ldots, r). 
\]
With 
\[
b =  \begin{bmatrix} 
                           b_1    & 0     &  \cdots     &   \cdots  &   0  \\
                            0       & b_2  &0    &       &  \vdots \\
                            \vdots    &  0   & \ddots    &\ddots   &\vdots  \\
                            \vdots &  \vdots        &  \ddots     &        \ddots      & 0 \\
                             0 &   0       &  \cdots     &          0    & b_r 
                             \end{bmatrix},  
\]
we then have $\tp b = b$ and 
\[
    b (h a h^{-1} ) b^{-1} =  \tp (h a h^{-1}).
    \]
Rearranging gives 
\[
      (\tp h b h) a (\tp h b h)^{-1}    =  \tp a
      \]    
with $\tp   (\tp h b h)  =  \tp h b h$.

\subsection{}
We recall some generalities about $F[X]$-modules that we will  apply eventually to $V_a$.  

For any $F[X]$-module $M$, we set $M^\vee = {\rm Hom}_F(M, F)$. We  write 
$\vform$ for the canonical (evaluation) pairing between $M$ and $M^\vee$:
\[
     \langle m, m^\vee \rangle =  m^\vee(m), \hskip15pt   m \in M,\,  m^\vee \in M^\vee.
     \]
 The space $M^\vee$ is an $F[X]$-module via     
\[
\langle m, f(X) m^\vee \rangle =  \langle f(X) m, m^\vee \rangle,  \hskip15pt   m \in M, 
 \,m^\vee \in M^\vee.
\]

For $m \in M$, let $\epsilon(m) = \langle m, - \rangle$, so that 
$\epsilon(m) \in M^{\vee \vee}$. The resulting map 
\begin{equation} \label{d-dual} 
   \epsilon = \epsilon_M: M \to M^{\vee \vee}
   \end{equation} 
is an $F[X]$-module homomorphism. Thus, if $M$ has finite dimension over $F$,  then (\ref{d-dual}) 
is an isomorphism of $F[X]$-modules. 

Suppose now that $M$ is a torsion $F[X]$-module that is also finite dimensional as an $F$-vector space. 
Let $\mathcal{P}$ denote the set of monic irreducible polynomials in $F[X]$. For $p \in \mcp$, we write
$M[p]$ for the $p$-primary component of $M$: 
\[
M[p] = \{ m \in M :  p^e  m = 0 \,\,\text{for some positive integer $e$} \}.
\]
Then 
\begin{equation} \label{p-primary}
     M = \bigoplus_{p \in \mcp} M[p].
     \end{equation} 
Moreover, for any $p \in \mcp$, there is a canonical isomorphism
\[
      M^\vee [p] \, \cong \, M[p]^\vee, 
      \]
so that 
\[ 
  M^\vee \cong \bigoplus_{p \in \mcp} M[p]^\vee, 
  \] 
which also follows directly from (\ref{p-primary}). 
By the Chinese Remainder Theorem, $M$ is cyclic if and only if each of its $p$-primary components
$M[p]$ is cyclic.  Further, for each $p \in \mcp$, $M[p]$ is cyclic if and only if $M[p]$ is indecomposable. 
Observe next that $M[p]$ is indecomposable if and only if $M[p]^\vee$ is indecomposable
(for any $p \in \mcp$). 
Indeed, if $M[p]$ splits as a non-trivial direct sum, then the  same holds for $M[p]^\vee$. For the other direction, note that 
\[
M[p]^{\vee \vee} \cong M^{\vee \vee} [p] \cong M[p].           
\]
Thus if $M[p]^\vee$ is a non-trivial direct sum then $M[p]$ splits in the same way. 
It follows that $M$ is cyclic if and only if $M^\vee$ is cyclic. 

Write ${\rm ann} \, M$ for the annihilator of the $F[X]$-module $M$:
\[
{\rm ann} \, M = \{ f(X) \in F[X] :  f(X)m = 0 \,\, \text{for all $m \in M$} \}.
\]
Note that 
\begin{equation} \label{ann-dual}
       {\rm ann} \, M =  {\rm ann} \, M^\vee.
       \end{equation} 
Indeed, 
\begin{align*} 
      f(X) \in {\rm ann}\, M \,\, &\Longleftrightarrow \,\, \langle f(X)m, m^\vee \rangle = 0,  \,\,\, 
              \forall \, m \in M, \, m^\vee \in M^\vee \\ &\Longleftrightarrow \,\, 
\langle m, f(X) m^\vee \rangle = 0,  \,\,\, \forall \, m \in M, \, m^\vee \in M^\vee \\ 
   &\Longleftrightarrow  \,\, f(X) \in {\rm ann} \, M^\vee.
   \end{align*} 
Thus, for $M$ cyclic,  
\begin{align*}  
M  &\cong F[X] / {\rm ann} \,M \\ 
&= F[X] / {\rm ann} \, M^\vee  \hskip10pt (\text{by (\ref{ann-dual})}) \\
&\cong M^\vee. 
\end{align*}                   

In general, $M$ is a direct sum of cyclic submodules. Since each of these cyclic summands is 
self-dual, we see again that  $M \cong M^\vee$. Taking $M = V_a$, we have an isomorphism of 
$F[X]$-modules
\begin{equation} \label{eta}
\eta:V_a \overset{\simeq}{\longrightarrow} V_a^\vee.
   \end{equation} 

Let $\hform$ denote the usual dot product on $V$, that is, 
\[
    ( v, w) =  \tp v  w, \hskip15pt v, w \in V.
    \]
Then, for any $b \in {\rm M}_n(F)$ and $v, w \in V$,  
\[
            (  b v, w) = ( v, \tp b w). 
            \]
In particular, 
\[
            ( \tp a v, w) = ( v,  a w),      \hskip15pt v, w \in V.
            \]
Thus, if we set $\gamma(v) = (v, -)$ for $v \in V$, then 
 \[
 \gamma: V_{\tp a} \overset{\simeq}\longrightarrow   \vad
 \]
 is  an isomorphism of $F[X]$-modules. Hence
 \[
 \gamma^{-1} \circ \eta: V_a \overset{\simeq}{\longrightarrow}  V_{\tp a} 
 \]
 is an isomorphism of $F[X]$-modules. 
 This means there is a $g \in {\rm GL}_n(F)$ such that 
 \begin{equation} \label{trans-conj}
     g a g^{-1} = \tp a.
     \end{equation} 
     
 \subsection{}   \label{cyclic-case}
We write  $\tp \eta: V_a^{\vee \vee} \to \vad$ for the dual isomorphism to (\ref{eta}) and set 
 $\eta^\vee = \tp \eta \circ \epsilon$ with $\epsilon$ as in (\ref{d-dual}) (for $M = V_a$). 
Thus  $\eta^\vee: V_a \to V_a^\vee$ is again an isomorphism of $F[X]$-modules.  Unwinding the definitions, one checks that it is characterized by the identity 
\begin{equation} \label{eta-check} 
    \langle     v, \eta^\vee (w)  \rangle     =       \langle  w, \eta(v)  \rangle,\hskip15pt 
  \forall \,\,    v , w \in V.
\end{equation} 

We wish to show that if $V_a$ is cyclic then any $g$ that satisfies (\ref{trans-conj})
 is necessarily symmetric. The crux of our argument is the following. 
\begin{Lem}  \label{self-dual-map} 
Assume that $V_a$ is cyclic. Then, with notation as above, $\eta^\vee = \eta$. 
\end{Lem}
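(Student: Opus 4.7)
My plan is to translate the claim into a statement about a bilinear form and use cyclicity to force its symmetry.

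Consider the $F$-bilinear form $B:V \times V \to F$ defined by $B(v,w) = \langle v, \eta(w) \rangle$. Because $\eta$ is $F[X]$-linear, the $F[X]$-module structure on $V_a^\vee$ gives
\[
B(v, aw) = \langle v, \eta(aw) \rangle = \langle v, X \eta(w) \rangle = \langle av, \eta(w) \rangle = B(av, w),
\]
so by iteration $B(p(a)v, w) = B(v, p(a)w)$ for every $p(X) \in F[X]$. On the other hand, the characterization (\ref{eta-check}) rewrites as $\langle v, \eta^\vee(w) \rangle = B(w,v)$. So the identity $\eta^\vee = \eta$ is equivalent to the symmetry of $B$, i.e. $B(v,w) = B(w,v)$ for all $v, w \in V$.

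To prove this symmetry, I would use cyclicity of $V_a$. Pick a generator $v_0$ of $V_a$, so that every element of $V$ has the form $p(a) v_0$ for some $p(X) \in F[X]$. For any $p_1, p_2 \in F[X]$, the shift property yields
\[
B(p_1(a) v_0, p_2(a) v_0) = B(v_0, p_1(a) p_2(a) v_0) = B(v_0, (p_1 p_2)(a) v_0),
\]
and the right-hand side is plainly symmetric in $p_1$ and $p_2$ since $F[X]$ is commutative. Therefore $B$ is symmetric, and the lemma follows.

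There is no serious obstacle here; the only point requiring care is verifying that the identity (\ref{eta-check}) really does encode the transpose of the bilinear form attached to $\eta$, so that symmetry of $B$ matches the equality $\eta^\vee = \eta$. Cyclicity is used in exactly one place, to reduce the verification to pairs of the form $(p_1(a) v_0, p_2(a) v_0)$, which is precisely where commutativity of $F[X]$ takes over.
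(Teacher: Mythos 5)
Your proof is correct and follows essentially the same route as the paper's: pick a generator $v_0$, observe that the module structure lets you shift polynomials across the pairing, and then use commutativity of $F[X]$ to conclude symmetry. Packaging this as the symmetry of the bilinear form $B(v,w) = \langle v, \eta(w)\rangle$ is a clean way to phrase it, but it is the same computation the paper performs directly with $\langle f(X)g(X)v_1, \eta(v_1)\rangle$.
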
 

\begin{proof} 
Let $v_1$ be a generator of $V_a$. 
In (\ref{eta-check}), we can write $v = f(X) v_1$ and $w = g(X) v_1$ for suitable 
$f(X), \,g(X) \in F[X]$. 
Then 
\begin{align*} 
    \langle  w, \eta (v) \rangle       &=  \langle g(X)v_1, \eta (f(X) v_1) \rangle \\
                                                 &=   \langle f(X) g(X) v_1, \eta (v_1) \rangle.
                                                 \end{align*} 
In the same way, 
\[
     \langle  v, \eta^\vee (w) \rangle   =    \langle f(X) g(X) v_1, \eta^\vee (v_1) \rangle.  
\]
Hence 
\[
        \langle g(X) v_1, \eta( f(X) v_1) \rangle = \langle g(X) v_1, \eta^\vee(f(X) v_1 ) \rangle.
        \]
Thus 
        \[
        \langle w, \eta (v) \rangle = \langle w, \eta^\vee ( v) \rangle, \hskip15pt \forall \,\, v, w \in V, 
\]
and so $\eta = \eta^\vee$ as claimed. 
\end{proof} 
The matrix $g$ of (\ref{trans-conj}) satisfies 
\[
    g v    =   \gamma^{-1} (\eta v),
    \]
    or $ \gamma ( g v) = \eta (v)$, for  $v \in V$.  Hence
\[ 
(gv, w)    =   \langle w, \eta( v) \rangle,  \hskip15pt \forall \,\, v, w \in V. 
\] 
Therefore
\begin{align*} 
        (v, gw)    &= (gw, v)   \\   
        &=   \langle v, \eta( w) \rangle,      \hskip15pt \forall \,\, v, w \in V.   \notag 
        \end{align*} 
Thus, for all $v, w \in V$,  
\[
 (gv, w) = (v, gw) \,\, \Longleftrightarrow \,\, \langle w, \eta (v) \rangle = \langle  v, \eta (w) \rangle.
    \]
Using (\ref{eta-check}), it follows that 
\begin{equation}    \label{equivalence}
      \tp g = g  \,\,  \Longleftrightarrow \,\, \eta^\vee = \eta.
      \end{equation} 
Hence, by Lemma \ref{cyclic-case}, $\tp g = g$ whenever $V_a$ is cyclic.           
\subsection{}
Suppose now that $V_a$ is non-cyclic. 
It remains to show that there is a 
$g \in {\rm GL}_n(F)$ such that $g a g^{-1} = \tp a$ with $\tp g \neq g$. By (\ref{equivalence}), 
it suffices to show that $\eta^\vee \neq \eta$ with $\eta$ as in (\ref{eta}).

As $V_a$ is non-cyclic, we can write 
\[
V_a = V_1 \oplus V_2 \oplus \cdots \oplus V_r
\]
where a) each $V_i$ is a cyclic submodule
and b) ${\rm Hom}_{F[X]}(V_1, V_2^\vee) \neq \{ 0 \}$. For example, as noted above, some $p$-primary
component $V_a[p]$ must be non-cyclic, so this component splits as a non-trivial direct sum $V_1 \oplus V_2$. As $F[X]/(p)$ is the unique composition factor of $V_1$ and $V_2^\vee$, condition
b) surely holds. 

We fix a non-zero $F[X]$-module homomorphism $\eta_{12}:V_1 \to V_2^\vee$. 
Dualizing gives a non-zero $F[X]$-module map $\tp \eta_{12}:V_2^{\vee \vee} \to V_1^\vee$. 
Then $\eta_{12}^\vee  =  \eta_{12} \circ \epsilon_{V_2}$ is a non-zero $F[X]$-module map from
$V_2$ to $V_1^\vee$. 
We also 
fix $F[X]$-module isomorphisms $\eta_i:V_i \to V_i^\vee$ (for $i = 1, \ldots, r$). 
Using block matrix notation, we set 
\[
\eta = \begin{bmatrix} 
                 \eta_1    & \eta_{12}     &  0     &   \cdots  &   0  \\
                            0       & \eta_2  &0    &       &  \vdots \\
                            \vdots    &  0   & \ddots    &\ddots   &\vdots  \\
                            \vdots &  \vdots        &  \ddots     &        \ddots      & 0 \\
                             0 &   0       &  \cdots     &          0    & \eta_r 
                             \end{bmatrix},  
\]
so that $\eta:V_a \to V_a^\vee$ is an isomorphism of $F[X]$-modules. 
We have 
\[
\eta^\vee = \begin{bmatrix} 
                 \eta_1    &   0   &  0     &   \cdots  &   0  \\
                 \eta_{12}^\vee   & \eta_2  &0    &       &  \vdots \\
                    0    &  0   & \ddots    &\ddots   &\vdots  \\
                    \vdots &  \vdots        &  \ddots     &        \ddots      & 0 \\
                  0 &   0       &  \cdots     &          0    & \eta_r 
                             \end{bmatrix}.  
\]
In particular, $\eta^\vee \neq \eta$. This completes the proof.    
\end{proof}


\begin{thebibliography}{99}


 \bibitem{GK}
    I.~M. Gelfand and D.~A. Kazhdan, {\em Representations of the group 
   $GL(n,K)$ where $K$ is a local field.} Lie groups and their 
    representations (Proc. Summer School, Bolyai J\'{a}nos Math. Soc.,
     Budapest, 1971) pp. 95-118. Halsted, New York, 1975.             

  
\bibitem{Kap}
 I. Kaplansky, {\it Linear Algebra and Geometry: a second course.}  2nd edition. Chelsea Publishing Company, New York, NY, 1974.

\bibitem{KMRT}
M.-A. Knus, A. Merkurjev, M. Rost, J.-P. Tignol, {\it The Book of Involutions}, AMS Colloquium Publications 44 (Amer. Math. Soc., Providence, RI, 1998). 




\bibitem{MS}
G. Mui\'{c} and G. Savin,  {\it Complementary series for Hermitian quaternionic groups}, 
    Canad. Math. Bull.  43  (2000),  no. 1, 90--99. 

\bibitem{Ragh}
 A. Raghuram,  {\it On representations of $p$-adic ${\rm GL}_2(D)$}, 
      Pacific J. Math. 206 (2002), no. 2, 451--464.
              


\bibitem{RV-JLT}
 A. Roche and C.~R. Vinroot, {\it Dualizing involutions for classical and similitude groups over local non-Archimedean fields,} 
J. Lie Theory 27 (2017), no.~2, 419-434.  
  
 \bibitem{RV-CMB} 
A. Roche and C.~R. Vinroot, {\em A factorization result for classical and similitude groups,}
Canad. Math. Bull. 61 (2018), no.~1, 174-190. 


\bibitem{TTZ}
O. Taussky and H. Zassenhaus, {\em On the similarity transformation between a matrix and its transpose,}  Pacific J.~Math.  9 (1959), 893-896.  


\bibitem{Tupan}
A. Tupan, {\it A triangulation of ${\rm GL}(n, F )$,} Represent.~Theory 10 (2006), 158-163.



\end{thebibliography}
\end{document}